\title{The Convergence of the Catalan Number Generating Function}
\author{Mark B. Villarino\\
Escuela de Matem\'atica, Universidad de Costa Rica,\\
11501 San Jos\'e, Costa Rica}
\date{}
\theoremstyle{plain}
\newtheorem{thm}{Theorem}             
\def\section{\@startsection{section}{1}{\z@}{-3.5ex plus -1ex minus
			  -.2ex}{2.3ex plus .2ex}{\large\bf}}
\def\subsection{\@startsection{subsection}{2}{\z@}{-3.25ex plus -1ex
			  minus -.2ex}{1.5ex plus .2ex}{\normalsize\bf}}
\renewcommand{\geq}{\geqslant}  
\renewcommand{\leq}{\leqslant}  
\newcommand{\word}[1]{\quad\mbox{#1}\quad} 
\newcommand{\hideqed}{\renewcommand{\qed}{}} 
\begin{document}

\maketitle

\section{Introduction} 

In a letter dated September 4, 1751, \textsc{Leonhard Euler} proposed
the following problem to his friend, \textsc{Christian Goldbach} 
\cite[Appendix~B, p.~178]{Stanley}:
\emph{In how many ways, $T_n$, can a convex polygon of $n$~sides be
partitioned into triangles by diagonals which do not intersect within
the polygon?}
The statement of the problem is quite easy to understand, yet its
general solution leads to extraordinary difficulties. Euler worked out
the first few cases $T_3 = 1$, $T_4 = 2$, $T_5 = 5$ by actually
drawing the triangulations and computed (without diagrams) $T_6 = 14$,
$T_7 = 42$, $T_8 = 132$, $T_9 = 429$, $T_{10} = 1430$ and then
conjectured the formula
\begin{equation}
\label{Tn} 
T_n = \frac{2\cdot 6\cdot 10\cdot 14\cdot 18\cdot 22\cdots (4n - 10)}
{2\cdot 3\cdot 4\cdot 5\cdot 6\cdot 7\cdots (n - 1)} \,.
\end{equation}
At the end of his letter Euler guessed the generating function
$$
T(x) := 1 + 2x + 5x^2 + 14x^3 + 42x^4 + 132x^5 +\cdots 
= \frac{1 - 2x - \sqrt{1 - 4x}}{2x^2} \,,
$$
and added:
\begin{quote}
``However, the induction that I employed was pretty tedious, and I do
not doubt that this result can be reached much more easily.''
\end{quote}

Goldbach answered a month later, observing that the generating
function satisfies the quadratic equation
\begin{equation}
\label{Q} 
1 + xT = T^{1/2},
\end{equation}
which is equivalent to infinitely many equations in the coefficients
and suggested that they may lead to a direct proof of Euler's
formula~\eqref{Tn}.

It will be convenient to change the notation. We let
$$
C_{n-2} := T_n.
$$
Thus, $C_n$ is the number of triangulations of a convex $(n + 2)$-gon,
and since a little algebra shows that
$$
T_n = \frac{1}{n - 1} \binom{2n - 4}{n - 2},
$$
where $\binom{n}{m} = \frac{n!}{m!(n-m)!}$ is the binomial
coefficient, we obtain the famous formula:
\begin{equation}
\label{cat} 
C_n = \frac{1}{n + 1} \binom{2n}{n}
\end{equation}
for what today is called the $n^\mathrm{th}$ \emph{Catalan number}. We
note that $C_0 = 1$. The generating function is now
\begin{equation}
\label{Cx} 
C(x) := C_0 + C_1x + C_2x^2 +\cdots+ C_nx^n +\cdots
\end{equation}
and is given by the formula
$$
C(x) = \frac{1 - \sqrt{1 - 4x}}{2x} \,.
$$

Some time later Euler suggested the problem to \textsc{Johann Andreas
von Segner} who, in 1758, published the recursion 
formula~\cite{Segner}:
\begin{equation}
\label{Rec} 
C_{n+1} 
= C_0 C_n + C_1 C_{n-1} + C_2 C_{n-2} +\cdots+ C_{n-1} C_1 + C_n C_0,
\end{equation}
together with a combinatorial proof. However, he apparently was
unaware of Euler's explicit product formula~\eqref{Tn} since he never
mentions it. Instead he uses~\eqref{Rec} to directly compute $C_n$ for
$n = 1,2,\dots,18$.

Euler's letter is \emph{the first known publication of the Catalan
numbers}, and with Segner's recursion formula all the tools were
available for the modern development.

Today, of course, the standard treatment of $C_n$ is to use Segner's
recursion formula~\eqref{Rec} to deduce Goldbach's quadratic
equation~\eqref{Q} which leads to the generating function \eqref{Cx}
and finally to the explicit formula~\eqref{cat}, but eighty years
would pass before \textsc{Binet} would publish such a
proof~\cite{Binet}.

The Catalan numbers continue to fascinate mathematicians, and 
\textsc{Richard Stanley} recently published a book devoted exclusively
to them~\cite{Stanley} in which he presents 214~(!) combinatorial
interpretations of~$C_n$.

In 1967, 
\textsc{Marshall Hall} published a text on combinatorics~\cite{Hall}
and on page~28 we find the following comment (the notation has been
slightly altered):
\begin{quote}
``We observe that an attempt to prove the convergence of~\eqref{Cx} on
the basis of \eqref{Rec} alone is exceedingly difficult.''
\end{quote}
Hall offers no suggestions towards such a proof. Moreover, a search of
the voluminous literature on Catalan numbers has failed to find such a
proof. Therefore we offer a proof in this paper.

\section{Heuristics} 

In order to prove the convergence of \eqref{Cx} we have to show that
the power series $C(x)$ has \emph{a positive radius of convergence,
$R$}. By the \textsc{Cauchy--Hadamard} theorem from elementary
analysis we have to show that
\begin{equation}
\label{r1} 
\frac{1}{R} = \limsup_{n\to\infty} |C_n|^{1/n} > 0.
\end{equation}

Now, if we could show that there exists a positive constant $M$ such
that the inequality
\begin{equation}
\label{c1} 
C_n \leq M^n
\end{equation}
holds for all  $n \geq 0$, then we could conclude that
$R \geq \frac{1}{M} > 0$. But Segner's recursion formula~\eqref{Rec}
would give us
\begin{align*}
C_{n+1} &\leq M^0\cdot M^n + M^1\cdot M^{n-1} + M^2\cdot M^{n-2}
+\cdots+ M^{n-1}\cdot M + M^n\cdot M^0
\\
&= M^n (n + 1),
\end{align*}
which is a second upper bound. If we could show that it is smaller
than $M^{n+1}$ then we could conclude by induction that \eqref{c1}
holds for all~$n$. Unfortunately,
$$
M^n(n + 1) \leq M^{n+1} \implies n + 1 \leq M
$$
which is plainly false for all sufficiently large~$n$. So, the second
upper bound is, in fact, \emph{larger} than the first one and this
shows us that the induction step does not work for a bound of the
form~\eqref{c1}.
Thus, one has to alter~\eqref{c1} so as to somehow ``cancel'' the
factor $n + 1$ which multiplies $M^n$. This suggests that we should
try an inequality of the form 
$$
C_n \leq n^{-r} M^n
$$
for some positive integer~$r$, yet to be determined, with $n \geq 1$
and $C_0 = 1$, where the factor $n^{-r}$ effects the cancellation.

If we try $r = 1$, i.e., if we assume that the inequality
\begin{equation}
\label{c3} 
C_n \leq n^{-1} M^n
\end{equation}
holds for all $n \geq 1$, the recursion formula~\eqref{Rec} gives us
\begin{align*}
C_{n+1} &\leq 1\cdot \frac{M^n}{n}
+ \frac{M^1}{1} \cdot \frac{M^{n-1}}{(n-1)}
+ \frac{M^2}{2} \cdot \frac{M^{n-2}}{(n-2)} 
+\cdots+ \frac{M^{n-1}}{(n-1)} \cdot \frac{M}{1}
+ \frac{M^n}{n} \cdot 1
\\
&= M^n \biggl( \frac{2}{n}
+ \sum_{k=1}^{n-1} \frac{1}{k(n - k)} \biggr)
\end{align*}
and by~\eqref{c3}, we want
$$
M^n \biggl( \frac{2}{n}
+ \sum_{k=1}^{n-1} \frac{1}{k(n - k)} \biggr).
\leq \frac{M^{n+1}}{(n + 1)}
$$

But, if $H_k := 1 + \frac{1}{2} +\cdots+ \frac{1}{k}$ denotes the
$k^\mathrm{th}$ harmonic number,
then the identity $\frac{1}{k(n-k)} 
\equiv \frac{1}{n} \bigl( \frac{1}{k} + \frac{1}{n-k} \bigr)$ shows us
that 
$$
\frac{2}{n} + \sum_{k=1}^{n-1} \frac{1}{k(n - k)}
= \frac{1}{n}(2 + 2H_{n-1}) < \frac{M}{n+1} \,,
$$
or $H_{n-1} < \frac{n}{n+1} - 2$, which is false for all~$n$.

However, if we try $r = 2$, i.e., if we assume that the inequality
\begin{equation}
\label{c4} 
C_n \leq n^{-2} M^n
\end{equation}
holds for all $n \geq 1$, the recursion formula~\eqref{Rec} gives us
\begin{align*}
C_{n+1} &\leq 1 \cdot \frac{M^n}{n^2}
+ \frac{M^1}{1^2} \cdot \frac{M^{n-1}}{(n-1)^{2}}
+ \frac{M^2}{2^2} \cdot \frac{M^{n-2}}{(n-2)^2} 
+\cdots+ \frac{M^{n-1}}{(n-1)^2} \cdot \frac{M}{1^2}
+ \frac{M^n}{n^2} \cdot 1
\\
&= M^n \biggl( \frac{2}{n^2}
+ \sum_{k=1}^{n-1} \frac{1}{\{k(n - k)\}^2} \biggr)
\end{align*}
and by~\eqref{c4}, we want
$$
M^n \biggl( \frac{2}{n^2}
+ \sum_{k=1}^{n-1} \frac{1}{\{k(n - k)\}^2} \biggr)
\leq \frac{M^{n+1}}{(n + 1)^2} \,,
$$
or
\begin{equation}
\label{c5} 
\frac{2}{n^2} + \sum_{k=1}^{n-1} \frac{1}{\{k(n - k)\}^2}
\leq \frac{M}{(n + 1)^2} \,.
\end{equation}

We will show that \emph{if $M = 6$ then the inequality~\eqref{c5}
holds for all $n \geq 37$.} By direct numerical computation it can be
verified that \eqref{c3} holds for $n = 1,\dots,36$ and that therefore
the inequality~\eqref{c3} holds for all $n \geq 1$. Therefore
$R \geq \frac{1}{6} > 0$.

We add that \textsc{Stirling's} formula and~\eqref{cat} show that 
$$
C_n \sim \frac{4^n}{\sqrt{\pi}\, n^{3/2}} \,,
$$
so that the true value of $R$ is $R = \frac{1}{4}$. However, this
presupposes the knowledge of the explicit formula for $C_n$, whereas
our analysis makes no such assumption.

\section{The Proof} 

\begin{thm} 
The following limit relation is valid:
\begin{equation}
\label{lema} 
\lim_{n\to\infty} \biggl\{ \biggl(
\frac{2}{n^2} + \sum_{k=1}^{n-1} \frac{1}{\{k(n - k)\}^2} \biggr)
\biggm/ \frac{1}{(n + 1)^2} \biggr\}
= 2 + \frac{\pi^2}{3}
\end{equation}
and, moreover, if the integer $n \geq 4$ then the quotient on the left
decreases monotonically to its limit. This may also be written as the
following asymptotic equality:
$$
\frac{2}{n^2} + \sum_{k=1}^{n-1} \frac{1}{\{k(n - k)\}^2}
\sim \frac{2 + \frac{\pi^2}{3}}{(n + 1)^2} \,.
$$
Moreover, if $n \geq 37$, then 
\begin{equation}
\label{lema1} 
\frac{2 + \frac{\pi^2}{3}}{(n + 1)^2}
< \frac{2}{n^2} + \sum_{k=1}^{n-1} \frac{1}{\{k(n - k)\}^2}
< \frac{6}{(n + 1)^2} \,.
\end{equation}
\end{thm}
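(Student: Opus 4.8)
The plan is to convert the left‑hand sum into a closed form by partial fractions, and then to settle the limit, the monotonicity, and the two numerical bounds one at a time. Throughout, write $a_n := \frac{2}{n^2} + \sum_{k=1}^{n-1}\frac{1}{\{k(n-k)\}^2}$, keep the paper's harmonic numbers $H_m = 1 + \frac12 + \cdots + \frac1m$, and introduce the second‑order harmonic numbers $H_m^{(2)} := 1 + \frac{1}{2^2} + \cdots + \frac{1}{m^2}$ (with $H_0 = H_0^{(2)} = 0$). Squaring the identity $\frac{1}{k(n-k)} = \frac{1}{n}\left(\frac{1}{k} + \frac{1}{n-k}\right)$ already used above gives $\frac{1}{\{k(n-k)\}^2} = \frac{1}{n^2}\left(\frac{1}{k^2} + \frac{2}{k(n-k)} + \frac{1}{(n-k)^2}\right)$; summing over $1 \leq k \leq n-1$, using the symmetry $k \leftrightarrow n-k$ together with $\sum_{k=1}^{n-1}\frac{1}{k(n-k)} = \frac{2H_{n-1}}{n}$ (itself a consequence of the same partial fraction), I obtain the identity
$$a_n = \frac{2(1 + H_{n-1}^{(2)})}{n^2} + \frac{4H_{n-1}}{n^3},$$
so that the quotient in \eqref{lema} is $Q_n := (n+1)^2 a_n = \frac{2(n+1)^2(1 + H_{n-1}^{(2)})}{n^2} + \frac{4(n+1)^2 H_{n-1}}{n^3}$.

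The limit now drops out: $\frac{(n+1)^2}{n^2} \to 1$, while $H_{n-1}^{(2)} \to \zeta(2) = \frac{\pi^2}{6}$ (Euler's solution of the Basel problem), and $\frac{4(n+1)^2 H_{n-1}}{n^3} = \frac{(n+1)^2}{n^2}\cdot\frac{4H_{n-1}}{n} \to 1\cdot 0 = 0$ because $H_{n-1} = O(\log n)$. Hence the first term of $Q_n$ tends to $2(1 + \frac{\pi^2}{6}) = 2 + \frac{\pi^2}{3}$ and the second to $0$, which is precisely the asserted asymptotic equality.

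For the monotonicity, set $g(n) := n^3 a_n = 2n + 2nH_{n-1}^{(2)} + 4H_{n-1} > 0$, so that $Q_n = \frac{(n+1)^2 g(n)}{n^3}$. Since every quantity here is positive, $Q_{n+1} < Q_n$ is equivalent to $n^3(n+2)^2 g(n+1) < (n+1)^5 g(n)$. A short computation gives $g(n+1) = g(n) + \delta_n$ with $\delta_n = 2 + 2H_{n-1}^{(2)} + \frac{6}{n} + \frac{2}{n^2}$, hence $n\delta_n = g(n) + 6 + \frac{2}{n} - 4H_{n-1}$. Substituting $g(n+1) = g(n) + \delta_n$ and $n^3(n+2)^2\delta_n = n^2(n+2)^2\bigl(g(n) + 6 + \frac{2}{n} - 4H_{n-1}\bigr)$, collecting the terms proportional to $g(n)$, and using the algebraic identity $(n+1)^5 - n^3(n+2)^2 - n^2(n+2)^2 = 2n^3 + 6n^2 + 5n + 1$, the inequality reduces to
$$n^2(n+2)^2\left(6 + \frac{2}{n} - 4H_{n-1}\right) < (2n^3 + 6n^2 + 5n + 1)\,g(n).$$
The right‑hand side is positive, while for $n \geq 4$ the left‑hand side is negative: indeed $\frac{2}{n} \leq \frac12$, so $6 + \frac{2}{n} \leq \frac{13}{2} < \frac{22}{3} = 4H_3 \leq 4H_{n-1}$. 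This establishes $Q_{n+1} < Q_n$ for every $n \geq 4$.

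It remains to derive \eqref{lema1}. Since $(Q_n)_{n \geq 4}$ decreases strictly to $2 + \frac{\pi^2}{3}$, every term strictly exceeds the limit, so $2 + \frac{\pi^2}{3} < Q_n = (n+1)^2 a_n$ for all $n \geq 4$, which is the left inequality. For the right one it is enough to verify by direct computation that $Q_{37} < 6$ (numerically $Q_{37} \approx 5.998$, whereas $Q_{36} \approx 6.015$, so $n = 37$ is exactly the threshold), after which monotonicity yields $Q_n \leq Q_{37} < 6$, i.e.\ $(n+1)^2 a_n < 6$, for every $n \geq 37$. The one step that is not mere bookkeeping is the reduction inside the monotonicity argument: a priori $Q_n$ is a sum of products of a decreasing factor by an increasing one, so comparing consecutive values looks to demand sharp two‑sided estimates, but re‑expressing $n\delta_n$ through $g(n)$ makes the dominant terms cancel identically, and everything collapses to the elementary bound $4H_{n-1} > 6 + \frac{2}{n}$.
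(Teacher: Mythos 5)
Your proof is correct, and its skeleton coincides with the paper's: the closed form $a_n = \frac{2(1+H_{n-1}^{(2)})}{n^2} + \frac{4H_{n-1}}{n^3}$ is exactly the identity the paper derives (the paper merely writes $H_{n-1}^{(2)}$ as $\frac{\pi^2}{6} - \sum_{k\geq n} k^{-2}$), and the limit, the strict lower bound by $2+\frac{\pi^2}{3}$, and the numerical threshold at $n=36,37$ all match. The one step where you genuinely diverge is monotonicity. The paper factors the quotient as $2\bigl(1+\frac1n\bigr)^2$ times a positive bracket, notes that the first factor is trivially decreasing, and shows the bracket decreases; that reduces to the elementary inequality $\frac12+\frac13+\cdots+\frac{1}{n-1} > \frac12+\frac{1}{2n}$, valid exactly for $n\geq 4$. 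You instead clear denominators, compute $g(n+1)-g(n)=\delta_n$ explicitly, and exploit the identity $(n+1)^5 - n^3(n+2)^2 - n^2(n+2)^2 = (n+1)(2n^2+4n+1)$ to collapse everything to $4H_{n-1} > 6+\frac2n$, again valid exactly from $n=4$ on. Your route costs a polynomial identity and more bookkeeping but is entirely self-contained; the paper's product decomposition reaches the same conclusion with less algebra because the two factors are separately positive and decreasing. Both arguments isolate the same underlying fact: the monotonicity of the quotient is ultimately a single inequality pitting $H_{n-1}$ against a bounded quantity, which is why the threshold $n\geq 4$ appears in both.
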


\begin{proof}
We suppose that the integer $n$ satisfies $n\geq 4$.

In what follows, we use the 
well-known elementary inequality $H_n < 1 + \ln n$, and
Euler's famous formula 
$$
\frac{1}{1^2} + \frac{1}{2^2} + \frac{1}{3^2} +\cdots 
= \frac{\pi^2}{6} \,.
$$
We begin with the left-hand side of \eqref{lema}: 
\begin{align}
\frac{2}{n^2} + \sum_{k=1}^{n-1} \frac{1}{\{k(n - k)\}^2}
&= \frac{2}{n^2} + \sum_{k=1}^{n-1} \biggl\{ \frac{1}{n} \biggl(
\frac{1}{k} + \frac{1}{n - k} \biggr) \biggr\}^2
\nonumber \\   
&= \frac{2}{n^2} + \frac{1}{n^2} \sum_{k=1}^{n-1} \biggl\{
\frac{1}{k^2} + \frac{2}{k(n-k)} + \frac{1}{(n-k)^2} \biggr\}
\nonumber \\
&= \frac{2}{n^2} 
+ \frac{1}{n^2} \sum_{k=1}^{n-1} \biggl\{ \frac{2}{k^2}
+ \frac{2}{n} \biggl( \frac{1}{k} + \frac{1}{n - k} \biggr) \biggr\}
\nonumber \\
&= \frac{2}{n^2} + \frac{2}{n^2} \sum_{k=1}^{n-1} \frac{1}{k^2}
+ \frac{4}{n^3}\, H_{n-1}
\nonumber \\
&= \frac{2}{n^2} \biggl\{ 1 + \frac{\pi^2}{6}
- \sum_{k=n}^\infty \frac{1}{k^2} + \frac{2}{n}\, H_{n-1} \biggr\}.
\label{trunc} 
\end{align}
    
Let
$$
g(n) := \biggl( \frac{2}{n^2}
+ \sum_{k=1}^{n-1} \frac{1}{\{k(n - k)\}^2} \biggr)
\biggm/ \frac{1}{(n + 1)^2} \,.
$$
The computation \eqref{trunc} shows that 
$$
g(n) = 2\biggl( 1 + \frac{1}{n} \biggr)^2 \biggl\{ 1 + \frac{\pi^2}{6}
- \sum_{k=n}^\infty \frac{1}{k^2} + \frac{2}{n}\, H_{n-1} \biggr\}.
$$
Therefore, $\lim_{n\to\infty} g(n) = 2 + \pi^2/3$,
establishing~\eqref{lema}.

To prove monotonicity, we have to show that
\begin{equation}
\label{gn} 
g(n) > g(n + 1).
\end{equation}
But, since clearly
$$
2\biggl( 1 + \frac{1}{n} \biggr)^2
> 2\biggl( 1 + \frac{1}{n + 1} \biggr)^2,
$$  
it suffices to prove that
$$
1 + \frac{\pi^2}{6} - \sum_{k=n}^\infty \frac{1}{k^2}
+ \frac{2}{n}\,H_{n-1}
> 1 + \frac{\pi^2}{6} - \sum_{k=n+1}^\infty \frac{1}{k^2}
+ \frac{2}{n + 1} \,H_n \,;
$$
which, after a little algebra, reduces to proving
$$ 
H_{n-1} - \frac{n}{n + 1}\,H_n > \frac{1}{2n} \,,
$$
or
$$
\frac{1}{2} + \frac{1}{3} +\cdots+ \frac{1}{n-1}
> \frac{1}{2} + \frac{1}{2n} \,,
$$
which holds if and only if $n \geq 4$. This completes the proof of
monotonicity.

To prove \eqref{lema1}, we compute directly
$$
g(36) = 6.0150\dots  \word{and}  g(37) = 5.9979\dots
$$
and by the monotonicity of $g(n)$ we conclude that 
$$
2 + \frac{\pi^2}{3} < g(n) < 6  \word{for}  n \geq 37.
\eqno \qed 
$$
\hideqed   
\end{proof}

\begin{thm} 
The Catalan number generating function
$$
C(x) := C_0 + C_1 x + C_2 x^2 +\cdots+ C_n x^n + \cdots
$$
has a positive radius of convergence at least equal
to~$\frac{1}{6}\,$.
\end{thm}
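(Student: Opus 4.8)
The plan is to derive this as a corollary of the preceding theorem by establishing, using Segner's recursion \eqref{Rec} alone and with no appeal to the closed form \eqref{cat}, the explicit majorization
$$
C_n \leq \frac{6^n}{n^2} \qquad (n \geq 1),
$$
together with $C_0 = 1$, and then feeding this into the Cauchy--Hadamard formula \eqref{r1}. The weight $n^{-2}$ is exactly the one isolated in the heuristic discussion as \eqref{c4}: the bare geometric bound $C_n \leq M^n$ cannot survive the factor $n+1$ generated by \eqref{Rec}, the weight $n^{-1}$ is still too weak to absorb it, whereas $n^{-2}$ with the constant $M = 6$ puts us precisely in the regime governed by \eqref{lema1}.

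First I would settle the base of the induction. Running \eqref{Rec} forward to produce the values $C_1, C_2, \ldots, C_{37}$ and comparing each against $6^n/n^2$, one verifies by direct computation (with a wide margin) that the majorization holds for every $n$ with $1 \leq n \leq 37$. It is worth noting that \emph{thirty-seven} base cases are needed, not thirty-six: as computed in the preceding theorem one has $g(36) = 6.015\ldots > 6$, so the inductive step below is not yet in force at $n = 36$.

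For the inductive step, fix $n \geq 37$ and assume $C_m \leq 6^m/m^2$ for all $m$ with $1 \leq m \leq n$. Applying \eqref{Rec} to $C_{n+1}$ and separating the two extreme terms,
$$
C_{n+1} = C_0 C_n + \sum_{k=1}^{n-1} C_k C_{n-k} + C_n C_0,
$$
I would bound the doubled boundary term by $2 C_0 C_n \leq 2\cdot 6^n/n^2$ (using $C_0 = 1$) and each interior product by $C_k C_{n-k} \leq \dfrac{6^k}{k^2}\cdot\dfrac{6^{n-k}}{(n-k)^2} = \dfrac{6^n}{\{k(n-k)\}^2}$, obtaining
$$
C_{n+1} \leq 6^n \biggl( \frac{2}{n^2} + \sum_{k=1}^{n-1} \frac{1}{\{k(n-k)\}^2} \biggr).
$$
By the right-hand inequality of \eqref{lema1}, valid because $n \geq 37$, the parenthesized quantity is strictly smaller than $6/(n+1)^2$, so $C_{n+1} < 6^{n+1}/(n+1)^2$. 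This closes the induction, and the majorization holds for all $n \geq 1$.

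To conclude, since each $C_n$ is positive we have $|C_n|^{1/n} = C_n^{1/n} \leq 6\, n^{-2/n} \leq 6$ for every $n \geq 1$; hence $1/R = \limsup_{n\to\infty} |C_n|^{1/n} \leq 6$, and therefore $R \geq \tfrac{1}{6} > 0$, as asserted. The substantive analytic work — the asymptotics and the monotonicity of the weighted sum — has already been discharged in the preceding theorem, so what remains here is essentially bookkeeping; the only two delicate points, and the only two places the argument can break, are the correct isolation of the doubled term $2C_0 C_n$ (the reason a factor $2/n^2$ rather than $1/n^2$ appears throughout) and the fact that the inductive step engages only once $n \geq 37$, which is what forces the slightly larger range of numerically verified base cases.
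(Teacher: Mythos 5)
Your proof is correct and follows the same route as the paper: numerically verified base cases for small $n$, plus a strong induction on Segner's recursion \eqref{Rec} whose step is closed by the right-hand inequality of \eqref{lema1}, and finally Cauchy--Hadamard. Your observation that the base cases must run through $n = 37$ rather than $n = 36$ --- since the inductive step first engages at $n = 37$ to produce $C_{38}$, while $g(36) > 6$ blocks the step one index earlier --- is a correct refinement of the paper's bookkeeping.
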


\begin{proof}
The previous theorem shows that \eqref{c3} holds for $n \geq 37$, and
by our earlier remarks, for $n \geq 1$. Therefore by the
Cauchy--Hada\-mard theorem \eqref{r1}, we conclude that
$R \geq \frac{1}{6} > 0$.
\end{proof}

\section{Remarks} 

Our method of proof is applicable to any integer $r = 2,3,\dots$. That
is, for any integer $r \geq 2$ there is a constant $M_r$ such that the
inequality
\begin{equation}
\label{cr} 
C_n \leq \frac{M_r^n}{n^r}
\end{equation}
holds for all sufficiently large~$n$. Moreover, the validity
of~\eqref{cr} for $r = 3$ shows that the inequality \eqref{cr} holds
for any~$r$ such that $2 \leq r \leq 3$. Therefore, the method is
applicable to any real number~$r$ which satisfies $r \geq 2$.

On the other hand, although the inequality~\eqref{cr} holds for every
integer $r \geq 2$, our proof shows that it does \emph{not} hold for
$r = 1$. So, this suggests that there is a first value $r = r_0$
between $1$ and~$2$ for which the inequality~\eqref{cr} is valid. The
methodology of the proof and Stirling's formula,
$$
C_n \sim \frac{4^n}{\sqrt{\pi}\,n^{3/2}}
$$
show that, in fact,
$$
r_0 = \frac{3}{2} \,,
$$
i.e., if $r \geq \frac{3}{2}$ then there exists a constant $M_r > 0$
such that the inequality~\eqref{cr} holds for sufficiently large~$n$
and that therefore the radius of convergence is positive. On the other
hand, if $r < \frac{3}{2}$, the inequality~\eqref{cr} does not hold.

It would be interesting to have a direct proof of this property of
$r = r_0$ without appealing to the explicit formula for the Catalan
numbers.

\subsection*{Acknowledgment}

Support from the Vicerrector\'ia de Investigaci\'on of the University
of Costa Rica is gratefully acknowledged. I thank Joseph C. V\'arilly
for a helpful remark.

\end{document}